\begin{document}
\title[Higgs bundles over $\bP^1$]{Higgs bundles over $\bP^1$ and quiver representations}

\author{Sergey Mozgovoy}
\address{School of Mathematics, Trinity College Dublin, Ireland
\newline\indent
Hamilton Mathematics Institute, Ireland}
\email{mozgovoy@maths.tcd.ie}

%\date{\today}

\begin{abstract}
In this note we prove a new explicit formula for the invariants of moduli spaces of twisted Higgs bundles over $\bP^1$ and we relate these invariants to the invariants of moduli spaces of representations of some infinite symmetric quiver.
The formula can be easily implemented for computations and it is in agreement with the existing conjectures for the invariants of twisted Higgs bundles over curves.
Results of this note were presented in Oberwolfach in May 2014.
\end{abstract}

\maketitle

%\tableofcontents

\section{Introduction}
Let $X=\bP^1$ be the projective line over a field \bk, $D$ be a divisor on $X$ and $L=\om_X(D)$ be the corresponding line bundle of degree $\ell\ge0$.
In this note we will study $L$-Higgs bundles $(E,\vi)$, where $E$ is a vector bundle over $X$ and $\vi:E\to E\ts L$ is a morphism.

Given coprime numbers $r,d$, let $M_L(r,d)$ be the moduli space of semistable $L$-Higgs bundles having rank $r$ and degree $d$.
One can ask about the motive of $M_L(r,d)$ if $\bk=\bC$ or about the number of points of $M_L(r,d)$ if $\bk=\bF_q$ is a finite field.
More generally, given a vector bundle $E$ over $\bP^1$, we can write it in the form $E=\bop_{i\in\bZ}O_X(i)^{\oplus m_i}$ for some map $m:\bZ\to\bN$ with finite support.
Let us define
\eq{M_L(m)
%=M_L(E)
=\sets{\vi:E\to E\ts L}{(E,\vi)\text{ is semistable}}/\Aut E.}
Then we can write the motive (or the number of points)
\eq{\mot{M_L(r,d)}=\sum_{\ov{m:\bZ\to\bN}{r(m)=r,d(m)=d}}\mot{M_L(m)},}
where $r(m)=\sum_{i\in\bZ} m_i$ and $d(m)=\sum_{i\in\bZ}im_i$.
Generalizing the first question, one can ask about the number of elements in $M_L(m)$. In particular, one can conjecture that $M_L(m)$ is polynomial count (\cf \cite{letellier_higgs} in the case of parabolic Higgs bundles).

To answer the first question, let us consider an infinite symmetric quiver $Q$ (which depends on $\ell=\deg L$) with the set of vertices \bZ and the number of arrows from $i\in\bZ$ to $j\in\bZ$ equal
\eq{\max\set{\ell+1-\n{i-j},0}.}
We will interpret a map $m:\bZ\to\bN$ with finite support as a dimension vector of $Q$-representations.
Given a $Q$-representation $V$, we define its slope
\eq{\mu(V)=\frac{d(\udim V)}{r(\udim V)},\qquad \udim V=(\dim V_i)_{i\in\bZ},}
and we say that $V$ is semistable if for any subrepresentation $V'\sbs V$, we have $\mu(V')\le\mu(V)$.
Let $M_Q(m)$ denote the moduli space of semistable $Q$-representations having dimension vector $m$.
One of the main results of this paper is

\begin{theorem}
If $r$ and $d$ are coprime, then
\eq{\label{eq:th1}
\vmot{M_L(r,d)}
=\sum_{\ov{m:\bZ\to\bN}{r(m)=r,d(m)=d}}
\vmot{M_Q(m)},}
where
$\vmot Y=\bL^{-\oh\dim Y}\mot Y$ and $\bL=\mot{\bA^1}$.
Moreover, if $d>\ell\binom r2$, then $M_Q(m)$ is empty unless $m_i=0$ for $i\le0$.
\end{theorem}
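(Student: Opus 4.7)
By the stratification $\mot{M_L(r,d)} = \sum_m \mot{M_L(m)}$ recorded in the introduction, the task is to identify the total virtual motive $\vmot{M_L(r,d)}$ with the sum $\sum_m \vmot{M_Q(m)}$ (over $m$ with $r(m)=r$, $d(m)=d$). The plan is to compute both sides through a Harder--Narasimhan recursion in the respective motivic Hall algebras.

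On the Higgs side, for each bundle type $m$, set $E_m = \bigoplus_i O_X(i)^{\oplus m_i}$ and introduce the full moduli stack $\higgMod_L(m) = [\Higgs(E_m)/\Aut(E_m)]$, where $\Higgs(E_m) = \Hom(E_m, E_m \ts L)$. The generating series $A_L = \sum_m \vmot{\higgMod_L(m)} z^m$ lives in the motivic Hall algebra of Higgs bundles on $\bP^1$, and a Reineke-type integration map extracts $\vmot{M_L(r,d)}$ (modulo a universal $\bL$-normalisation mediating between the stacky count and the coarse virtual motive). On the quiver side, the analogous series $A_Q = \sum_m \vmot{\higgMod_Q(m)} z^m$, with $\higgMod_Q(m) = [\Rep_Q(m)/\prod_i GL_{m_i}]$, yields $\sum_m \vmot{M_Q(m)}$ through the same recursion.

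The crux is to match the two integrations. Although $\vmot{\higgMod_L(m)}$ and $\vmot{\higgMod_Q(m)}$ differ at individual $m$, I expect the discrepancy to be absorbed by a careful bookkeeping between the asymmetric Higgs pairing $\max(j-i+\ell+1, 0)$, the unipotent radical of $\Aut(E_m)$ (of dimension $\sum_{i<j} m_i m_j(j-i+1)$), and the symmetric quiver pairing $\max(\ell+1-|i-j|, 0)$, with Serre duality on $\bP^1$ providing the bridge between the asymmetric and symmetric forms. After this accounting, $A_L$ and $A_Q$ should be related by a universal, slope-preserving transformation, so that the HN extractions at slope $\mu = d/r$ agree.

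For the emptiness statement, I would prove a Hitchin-type slope bound directly on the quiver side: if $V$ is a semistable $Q$-representation of rank $r$ and slope $\mu$, then $\min\{i : m_i > 0\} \ge \mu - \ell(r-1)/2$. The proof is by induction on $r$, exploiting that arrows in $Q$ only connect vertices at integer distance at most $\ell$: a gap of length $>\ell$ in the support of $V$ forces a direct-sum splitting, allowing the bound to propagate through smaller summands, while in the connected case the ``top'' vertex together with its $\ell$-neighbourhood produces a subrepresentation whose slope can be controlled. Combined with the hypothesis $d > \ell \binom r 2$, i.e.\ $\mu > \ell(r-1)/2$, the bound forces $\min\{i:m_i>0\} > 0$, hence $m_i = 0$ for $i \le 0$. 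The main obstacle of the entire proof lies in the Hall-algebra matching above: the identity holds only after summing at a fixed slope, not pointwise in $m$, and establishing it requires a precise accounting of Euler pairings, automorphism contributions, and the $\oh\dim$ normalisation inside $\vmot{\cdot}$.
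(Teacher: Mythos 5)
Your overall architecture --- count all Higgs fields on each split bundle $E_m$, compare with the stack of all $Q$-representations of dimension vector $m$, pass to semistable invariants by a Harder--Narasimhan/DT extraction, and prove the emptiness claim by a gap-and-slope argument --- is the same as the paper's, and your sketch of the emptiness part is essentially Lemma \ref{lmm:d>>0}. However, the step you yourself flag as ``the main obstacle'' is precisely the step you do not carry out, and your expectation about it is off. The paper proves the \emph{termwise} identity
\[
\bL^{-\oh\ell r^2}\,\frac{\mot{\Hom(E_m,E_m\ts L)}}{\mot{\Aut E_m}}
=\frac{\bL^{-\oh\hi(m,m)}}{(\bL\inv)_m}=\cJ_Q(m)
\]
for every splitting type $m$ (Theorem \ref{th:jl+} combined with \eqref{eq:jq}): the asymmetric dimensions $\hom(O(i),O(j)\ts L)=[j-i+\ell+1]_+$ and $\hom(O(i),O(j))=[j-i+1]_+$, once combined with the normalisation $\bL^{-\oh\ell r^2}$ and summed over the pairs $(i,j)$ and $(j,i)$, produce exactly the symmetric Euler--Ringel form $\hi$ of $Q$. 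No Serre duality and no ``universal slope-preserving transformation'' is needed, and your claims that the stacky virtual motives ``differ at individual $m$'' and that the identity ``holds only after summing at a fixed slope'' are not what happens (only the semistable identity fails pointwise in $m$). Since you merely ``expect the discrepancy to be absorbed'', the central computation of the proof is absent.

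There is a second gap: the series $A_L=\sum_m\vmot{\higgMod_L(m)}z^m$ cannot be integrated at a fixed class $(r,d)$ as written, because infinitely many splitting types $m$ have $r(m)=r$ and $d(m)=d$ (e.g.\ $O(n)\oplus O(-n)$ for all $n$ in rank $2$, degree $0$), so the relevant coefficient is an infinite sum and the HN extraction is undefined. The paper resolves this by working in the subcategory $\Higgs^+_L$ of Higgs bundles whose HN factors have positive slopes (equivalently, $m$ supported on $\bZ_{>0}$), where $\cJ^+_L(r,d)$ is a finite sum and the HN recursion closes up, and then invoking $\cM_L^+(r,d)=\cM_L(r,d)$ for $d>\ell\binom r2$ (Proposition \ref{prp:+vs all}, imported from \cite{mozgovoy_counting}) together with the periodicity $\Om_L(r,d+r)=\Om_L(r,d)$. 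Your plan needs this truncation, the matching statement $\Om_Q^+(m)=\Om_Q(m)$ for $d(m)>\ell\binom{r(m)}2$ (which is where your slope bound is actually used on the quiver side), and finally the coprimality of $(r,d)$ to identify the DT invariants with $\vmot{M_L(r,d)}$ and $\vmot{M_Q(m)}$; none of these reductions appears in your outline.
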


This result is of combinatorial nature and I don't know if there is any relation between the corresponding categories of vector bundles and quiver representations.
Also, it is not quite clear if there is any geometric relation between $M_L(m)$ and $M_Q(m)$ although it is tempting to suggest that there is one.

Invariants on the right hand side of \eqref{eq:th1} can be easily computed \eqref{eq:omq}.
Therefore \eqref{eq:th1} gives an explicit formula for the motive of $M_L(r,d)$.
Let us now discuss a generalization of the previous theorem in the context of Donaldson-Thomas invariants.
Given $r\in\bZ_{>0}$ and $d\in\bZ$, one can define refined Donaldson-Thomas invariants $\Om_L(r,d)$ that count (with a weight) semistable $L$-Higgs bundles having rank $r$ and degree $d$ \eqref{eq:oml}.
In particular, if $r,d$ are coprime, then $\Om_L(r,d)=\vmot{M_L(r,d)}$.
Similarly, given a dimension vector $m:\bZ\to\bN$, one can define Donaldson-Thomas invariants $\Om_Q(m)$ that count semistable $Q$-representations having dimension vector $m$ \eqref{eq:omq}.

\begin{theorem}
For every $r\in\bZ_{>0}$, $d\in\bZ$, we have
\eq{\label{eq:th2}
{\Om_L(r,d)}
=\sum_{\ov{m:\bZ\to\bN}{r(m)=r,d(m)=d}}{\Om_Q(m)}.}
Moreover, if $d>\ell\binom r2$, then $\Om_Q(m)=0$ unless $m_i=0$ for $i\le0$.
\end{theorem}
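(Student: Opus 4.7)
Our strategy is to upgrade the identity of Theorem~1 to a \emph{stacky} identity of generating series valid for every dimension vector (not just in the coprime case), and then apply the Kontsevich--Soibelman integration map on both sides simultaneously. The ``moreover'' statement will then follow formally from its counterpart in Theorem~1.

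\textbf{Step 1 (Stacky identity).} For each slope $\mu\in\bQ$, form the generating series
\[
A_L^\mu=\sum_{\ov{m:\bZ\to\bN}{\mu(m)=\mu}}\frac{\mot{M_L(m)}}{\mot{\Aut E_m}}x^m,\qquad
A_Q^\mu=\sum_{\ov{m:\bZ\to\bN}{\mu(m)=\mu}}\frac{\mot{M_Q(m)}}{\mot{\GL_m}}x^m,
\]
where $E_m=\bop_i O_X(i)^{m_i}$ and $M_L(m)$, $M_Q(m)$ are interpreted as the varieties of semistable Higgs fields on $E_m$ and semistable $Q$-representations of dimension $m$, to be divided by the indicated automorphism groups in the stacky sense. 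The argument underlying Theorem~1 is in fact a Hall-algebra-style identification of the summand-wise stacky motives; extracting it at this finer level yields $A_L^\mu=A_Q^\mu$ for every $\mu$, irrespective of any coprimality assumption (which is needed only to collapse the left hand side onto $\vmot{M_L(r,d)}$).

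\textbf{Step 2 (Integration map).} By the defining formulae \eqref{eq:oml}, \eqref{eq:omq}, both stacky series are plethystic exponentials in the refined invariants:
\[
A_L^\mu=\Exp\rbr{\frac{1}{\lf}\sum_{d/r=\mu}\Om_L(r,d)x^{(r,d)}},\qquad
A_Q^\mu=\Exp\rbr{\frac{1}{\lf}\sum_{\mu(m)=\mu}\Om_Q(m)x^m},
\]
where on the $L$-side the index of summation has been regrouped through $m\mto(r(m),d(m))$. Since the collapsing map $x^m\mto x^{(r(m),d(m))}$ is a ring homomorphism commuting with the Adams operations, it commutes with $\Exp$; applying $\Log$ to the identity $A_L^\mu=A_Q^\mu$ and collecting coefficients at fixed $(r,d)$ yields \eqref{eq:th2}.

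\textbf{Step 3 (Moreover).} Suppose $r=r(m)$, $d=d(m)$ with $d>\ell\binom{r}{2}$ and $m_{i_0}>0$ for some $i_0\le0$. The coefficient of $x^m$ in $\Log(A_Q^{d/r})$ is a universal polynomial, involving Adams operations $\Psi_n$, in the stacky motives $a_{m'}=\mot{M_Q(m')}/\mot{\GL_{m'}}$ indexed by $m'$ arising in decompositions $m=\sum_j n_jm_j$ with $\mu(m_j)=d/r$. Any such $m'$ satisfies $r(m')\le r$, so $d(m')=(d/r)\,r(m')>\ell(r-1)r(m')/2\ge\ell\binom{r(m')}{2}$, and Theorem~1's moreover gives $M_Q(m')=\es$ whenever $m'$ has support at some $i\le0$. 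Since at least one $m_j$ in every decomposition of $m$ must have $m_{j,i_0}>0$, every summand vanishes and $\Om_Q(m)=0$.

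\textbf{Main obstacle.} The substantive step is Step~1: Theorem~1 is stated at the level of motives of coarse moduli in the coprime case, but its proof must be extracted in its stacky form---\ie as an equality inside a motivic Hall algebra keeping track of automorphism groups on both sides. Once this is done, Steps~2 and~3 are formal consequences of the standard plethystic calculus underlying Kontsevich--Soibelman wall-crossing.
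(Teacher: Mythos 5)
There is a genuine gap, and it sits exactly where you locate it: Step~1. The ``stacky identity'' $A_L^\mu=A_Q^\mu$, with the variable $x^m$ remembering the splitting type of $E$ on one side and the dimension vector on the other, is a summand-wise comparison of the \emph{semistable} loci $M_L(m)$ and $M_Q(m)$. This is precisely the geometric relation that the paper explicitly disclaims knowledge of (``it is not quite clear if there is any geometric relation between $M_L(m)$ and $M_Q(m)$''), and it cannot be ``extracted'' from the proof of the coprime-case theorem, because that proof never compares semistable objects directly. What is actually proved term-by-term in $m$ (Theorem~\ref{th:jl+} combined with \eqref{eq:jq}) is an identity for the counts of \emph{all} objects: the contribution $\bL^{-\oh\ell r^2}\mot{\Hom(E,E\ts L)}/\mot{\Aut E}$ of a fixed positive bundle $E=\bop_iO_X(i)^{\oplus m_i}$ to $\cJ^+_L(r,d)$ equals $\cJ_Q(m)=\bL^{-\oh\hi(m,m)}/(\bL\inv)_m$, the stacky count of \emph{all} representations of dimension $m$. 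This is an elementary $\Ext$-computation on $\bP^1$ in which semistability plays no role; the semistable and DT invariants are then recovered on both sides from these total counts via the Harder--Narasimhan recursion and $\Log$ (Proposition~\ref{prp:jl+ vs il+} and its quiver analogue) --- that is, in the direction opposite to your Step~2. Your Step~1 therefore assumes a strictly stronger, unproved statement rather than a reformulation of known input.

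Two further points. First, the total-count identity only holds for \emph{positive} bundles against the subquiver on vertices $\bZ_{>0}$: without the positivity restriction the sum over splitting types of a fixed $(r,d)$ on $\bP^1$ is infinite, so there is no version of Step~1 valid ``for every $\mu$, irrespective of coprimality'' waiting to be extracted. The passage from the positive invariants $\Om_L^+$, $\Om_Q^+$ to $\Om_L$, $\Om_Q$ is exactly where the hypothesis $d>\ell\binom r2$, Lemma~\ref{lmm:d>>0}, and finally the periodicities $\Om_L(r,d+r)=\Om_L(r,d)$ and $\Om_Q(m[1])=\Om_Q(m)$ are needed; your proposal skips this entirely. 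Second, as written $A_L^\mu$ and $A_Q^\mu$ omit the normalizations $\bL^{-\oh\ell r^2}$ and $\bL^{\oh\hi(m,m)}$ appearing in \eqref{eq:oml} and \eqref{eq:omq}; since $\ell r(m)^2\ne-\hi(m,m)$ in general (they agree only when $m$ is supported on an interval of length $\le\ell$ in a suitable sense), the unnormalized series could not be equal even if the normalized ones were --- reconciling these two exponents is the actual content of the computation in Theorem~\ref{th:jl+}. Your Step~3 is essentially sound (it is Lemma~\ref{lmm:d>>0} fed into the wall-crossing formula, as in the paper), but it rests on the unestablished Step~1.
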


Note that $\Om_L(r,d+r)=\Om_L(r,d)$ and we can always assume that $d>\ell\binom r2$.
The above sum contains only finitely many nonzero summands.
Invariants $\Om_Q(m)$ are polynomials in $\bL^{\pm\oh}$ \cite{efimov_cohomological}
and can be explicitly computed \eqref{eq:omq}.
Similarly to invariants $\Om_L(r,d)$, invariants $\Om_Q(m)$ are unchanged under shifts, that is, $\Om_Q(m)=\Om_Q(m[1])$, where $m[1]_i=m_{i-1}$ for $i\in\bZ$ (it satisfies $d(m[1])=d(m)+r(m)$ and $r(m[1])=r(m)$).
A more effective way to compute invariants $\Om_L(r,d)$ is the following

\begin{theorem}\label{thm:effective}
Define invariants $\Om_L^+(r,d)$ by the formula
\eq{
\sum_{r,d>0}\frac{\Om_L^+(r,d)}\lf z^rw^d
=\Log\rbb{\sum_{m:\bZ_{>0}\to\bN} \frac{\bL^{-\oh\hi(m,m)}}{\prod_i(\bL\inv)_{m_i}} z^{r(m)}w^{d(m)}}
,}
where $\Log$ is the plethystic logarithm (see \eg \cite{mozgovoy_computational}),
$\hi$ is the Euler-Ringel form of $Q$ \eqref{euler}
and $(q)_n=\prod_{i=1}^n(1-q^i)$
is the $q$-Pochhammer symbol. 
Then 
\eq{\Om_L(r,d)=\Om_L^+(r,d),\qquad d>\ell\binom r2.}
\end{theorem}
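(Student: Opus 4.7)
The plan is to combine Efimov's plethystic formula for symmetric quivers with Theorem~\eqref{eq:th2} identifying $\Om_L(r,d)$ as a sum of quiver DT invariants. The strategy has three stages: apply Efimov to the full subquiver $Q^+\sbs Q$ on vertex set $\bZ_{>0}$; specialize the variables so as to match the two-variable generating function defining $\Om_L^+$; then identify the resulting DT invariants with those of the full quiver $Q$ and quote the vanishing statement of Theorem~\eqref{eq:th2}.

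First I would apply (the symmetric-quiver formulation of) Efimov's theorem \cite{efimov_cohomological} to $Q^+$, giving the identity
\eq{\sum_{m:\bZ_{>0}\to\bN}\frac{\bL^{-\oh\hi(m,m)}}{\prod_i(\bL\inv)_{m_i}}\prod_i x_i^{m_i}=\Exp\rbb{\frac{1}{\lf}\sum_{m:\bZ_{>0}\to\bN}\Om_{Q^+}(m)\prod_i x_i^{m_i}}}
in a suitable completion of $\bQ(\bL^\oh)[x_i\mid i\in\bZ_{>0}]$. Specializing $x_i\mto zw^i$ turns $\prod_i x_i^{m_i}$ into $z^{r(m)}w^{d(m)}$; since this is a monomial substitution it commutes with $\Exp$ and $\Log$, and the result is a well-defined identity in $\bQ(\bL^\oh)\pser{z,w}$. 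Taking $\Log$ and comparing with the defining formula for $\Om_L^+$ yields, upon extracting the coefficient of $z^rw^d$,
\eq{\Om_L^+(r,d)=\sum_{\ov{m:\bZ_{>0}\to\bN}{r(m)=r,\,d(m)=d}}\Om_{Q^+}(m).}

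The inclusion $Q^+\emb Q$ is that of a full subquiver, so a $Q^+$-representation is the same as a $Q$-representation with dimension vector supported on $\bZ_{>0}$, and its subrepresentations and slopes are the same viewed in either category; consequently $\Om_{Q^+}(m)=\Om_Q(m)$ for such $m$. By Theorem~\eqref{eq:th2} we have $\Om_L(r,d)=\sum\Om_Q(m)$ over all $m:\bZ\to\bN$ with $r(m)=r,\,d(m)=d$, and the vanishing statement therein says that under the hypothesis $d>\ell\binom r2$ only $m$ supported on $\bZ_{>0}$ contribute. This matches the sum above and delivers $\Om_L(r,d)=\Om_L^+(r,d)$.

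The main obstacle is the opening step: Efimov's theorem is originally stated for finite quivers, while $Q^+$ has infinitely many vertices. I would handle this via truncation. Let $Q^+_N\sbs Q^+$ denote the full subquiver on vertices $[1,N]$; apply Efimov's theorem to each $Q^+_N$ and pass to the limit $N\to\infty$. The limit is harmless because, after the substitution $x_i=zw^i$, the coefficient of $z^rw^d$ on either side of the identity is a finite sum over dimension vectors supported in $[1,d]$, and so it stabilizes once $N\ge d$.
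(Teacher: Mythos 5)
Your argument is internally coherent as far as it goes: the specialization $x_i\mto zw^i$ does commute with the plethystic $\Log$ (it respects the Adams operations and the grading by $r(m)$), the identification $\Om_{Q^+}(m)=\Om_Q(m)$ for $m$ supported on $\bZ_{>0}$ is correct (full subquiver, and the coefficient of $z^m$ in the plethystic $\Log$ involves only dimension vectors dominated by $m$), and the truncation to finite subquivers is harmless. The problem is that, relative to this paper, the proof is circular. You take Theorem \eqref{eq:th2} as an input; but the paper's proof of \eqref{eq:th2} consists precisely of the chain $\cJ^+_L(r,d)\to\Om^+_L(r,d)\to\Om_L(r,d)$, i.e.\ of the two facts that together \emph{are} Theorem \ref{thm:effective}: (i) the explicit evaluation $\cJ^+_L(r,d)=\sum_m\bL^{-\oh\hi(m,m)}/(\bL\inv)_m$ (Theorem \ref{th:jl+}), obtained by writing a positive bundle on $\bP^1$ as $\bop_iO_X(i)^{\oplus m_i}$ and computing $\mot{\Hom(E,E\ts L)}/\mot{\Aut E}$; and (ii) the equality $\Om_L^+(r,d)=\Om_L(r,d)$ for $d>\ell\binom r2$ (Proposition \ref{prp:+vs all}), which rests on the geometric input $\cM_L^+(r,d)=\cM_L(r,d)$ from \cite{mozgovoy_counting}. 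Neither of these appears anywhere in your argument. What you have actually shown is that Theorem \eqref{eq:th2} and Theorem \ref{thm:effective} are equivalent up to formal manipulations with $\Log$ — true, but it proves neither.

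A secondary point: the identity you attribute to Efimov is not his theorem but simply the definition \eqref{eq:omq} of the quiver DT invariants, rewritten with $\Exp$ in place of $\Log$. Efimov's result \cite{efimov_cohomological} is the integrality of the $\Om_Q(m)$ so defined, which the paper needs only for its final corollary and which plays no role here. A non-circular proof has to engage with the Higgs side: establish the stacky count of positive Higgs bundles over $\bP^1$ directly, feed it into the Harder--Narasimhan recursion of Proposition \ref{prp:jl+ vs il+}, and then invoke the comparison between positive and arbitrary semistable Higgs bundles in degree $d>\ell\binom r2$.
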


%\begin{proposition}\label{prop:intro1}
%We have
%$$\sum_{m:\bZ\to\bN}\Om_Q(m)z^m
%=(\bL^\oh-\bL^{-\oh})\Log\rbr{\sum_{m:\bZ\to\bN}
%\frac{\bL^{-\oh\hi(m,m)}}{\prod_i(\bL\inv)_{m_i}}z^m}
%,$$
%where $\Log$ is the plethystic logarithm (see \eg \cite{mozgovoy_computational}),
%$\hi$ is the Euler-Ringel form of $Q$ \eqref{euler}
%and $(q)_n=\prod_{i=1}^n(1-q^i)$
%is the $q$-Pochhammer symbol. 
%\end{proposition}

%Note that although $Q$ is infinite, in order to find $\Om_Q(m)$ for a given $m$, we can restrict $Q$ to the support of $m$ and make all computations over this finite subquiver.
%Therefore we can assume that $m$ has a positive support.

Let me now discuss relation of the above results to the existing literature.
A formula for the invariants $\Om_L(r,d)$ for arbitrary curves was conjectured in \cite{mozgovoy_solutions} based on the conjectures of \cite{chuang_motivic} and \cite{hausel_mixed}.
In particular, it was conjectured in \cite{chuang_motivic} that $\Om_L(r,d)$ are independent of $d$.
These conjectures are in agreement with the results \cite{hitchin_self-duality,gothen_betti,garcia-prada_motives} for low ranks and $D=0$.
In the case of $X=\bP^1$, $\deg D=4$ and low ranks, these conjectures were verified in \cite{rayan_co}.
An explicit formula (for arbitrary curves) was
proved in \cite{schiffmann_indecomposable} for $D=0$ and coprime $r,d$ and in \cite{mozgovoy_counting} for arbitrary $D$ and $r,d$.
This formula is rather complicated even for a computer implementation and it is different from the above conjectures.
The formulas of this paper work only for $\bP^1$, but they have an advantage of being easily computable although they are still different from the above conjectures.
As expected, all tests based on these formulas agree with the conjectures. These tests also show that $\Om_L(r,d)$ are independent of~$d$.

I would like to thank 
Emmanuel Letellier, 
Steven Rayan,
and Markus Reineke
for useful discussions.

\section{Higgs bundles over 
\tpdf{$\bP^1$}{P1}}

\subsection{Arbitrary curve}
Let $X$ be a curve of genus $g$, $D$ be a divisor on $X$ and $L=\om_X(D)$ be the corresponding line bundle of degree $\ell\ge\max\set{2g-2,0}$.
We define a (coherent) $L$-Higgs sheaf to be a pair $\h E=(E,\vi)$, where $E$ is a coherent sheaf and $\vi:E\to E\ts L$ is a morphism.
Let $\Higgs_L$ be the category of all $L$-Higgs sheaves and let $\Higgs^+_L$ be the category of $L$-Higgs sheaves $(E,\vi)$ such that
the factors in the Harder-Narasimhan filtration of $E$ (we call them HN-factors) have slopes $>0$ (we call such sheaves positive).

\begin{proposition}
For any $\h E,\h F\in\Higgs_L$, there is a long exact sequence
\begin{multline*}
0\to\Hom(\hE,\hF)\to\Hom(E,F)\to\Hom(E,F\ts L)\\
\to\Ext^1(\hE,\hF)\to\Ext^1(E,F)\to\Ext^1(E,F\ts L)
\to\Ext^2(\hE,\hF)\to0
\end{multline*}
\end{proposition}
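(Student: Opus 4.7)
The plan is to show that $\Ext^\bullet_{\Higgs_L}(\hE,\hF)$ can be computed as the hypercohomology of the two-term complex
\[ K^\bullet \;=\; \bigl[\,\cHom(E,F)\xrightarrow{D}\cHom(E,F\ts L)\,\bigr], \]
placed in degrees $0$ and $1$, where $D(f)=\psi\circ f-(f\ts 1_L)\circ\vi$ is the commutator measuring the failure of $f$ to be a Higgs morphism, and then to deduce the stated long exact sequence from the stupid-truncation triangle
\[ \cHom(E,F\ts L)[-1]\to K^\bullet\to\cHom(E,F)\xrightarrow{+1}. \]

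For the degree-$0$ identification, $\Hom_{\Higgs_L}(\hE,\hF)=\bH^0(X,K^\bullet)$ is immediate from the commutative-square definition of a Higgs morphism. To upgrade this to all $i\ge 0$, I would realize $\Higgs_L$ as (quasi-)coherent modules over the sheaf of commutative $\cO_X$-algebras $\cA=\Sym^\bullet L^\vee$, with the Higgs field encoding the action of the tautological section $t\in L^\vee\subset\cA$. Every Higgs sheaf then admits the Koszul-type free resolution
\[ 0\to\cA\ts_{\cO_X}(L^\vee\ts E)\xrightarrow{t-\vi}\cA\ts_{\cO_X} E\to\hE\to 0, \]
and applying $\cHom_\cA(-,\hF)$ together with the extension-of-scalars adjunction collapses this to $K^\bullet$ on $X$; taking $R\Gamma$ then gives the identification of Higgs Ext groups with $\bH^\bullet(X,K^\bullet)$.

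With this in hand, applying $R\Gamma(X,-)$ to the truncation triangle produces precisely the claimed seven-term sequence in hypercohomology, and the trailing zero is supplied by the vanishing of $\Ext^{\ge 2}_{\cO_X}$ on a smooth curve.

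The main obstacle is justifying $\Ext^i_{\Higgs_L}=\bH^i(K^\bullet)$ beyond $i=0$: one must construct the Koszul resolution, verify it is exact as a sequence of $\cA$-modules, and check that $\cHom_\cA$ of its two free summands reproduces $\cHom_{\cO_X}(E,F)$ and $\cHom_{\cO_X}(E,F\ts L)$ compatibly with the differential $t-\vi$. Once this identification is established, everything downstream is formal homological algebra.
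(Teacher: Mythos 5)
The paper offers no argument of its own here---it simply cites Gothen--King \cite{gothen_homological}---and your sketch is essentially a reconstruction of the proof in that reference: realize Higgs sheaves as modules over $\Sym^\bullet L^\vee$ (equivalently, sheaves on the total space of $L$), use the length-one Koszul resolution to identify $\Ext^\bullet_{\Higgs_L}(\hE,\hF)$ with the hypercohomology of the two-term commutator complex, and read off the seven-term sequence from the truncation triangle together with $\Ext^{\ge2}_{\cO_X}=0$ on a curve. The only point to watch is that the proposition is stated for arbitrary coherent Higgs sheaves, so for $E$ not locally free the complex $[\cHom(E,F)\to\cHom(E,F\ts L)]$ must be replaced by its derived version $R\cHom$ (i.e.\ one also resolves $E$ over $\cO_X$); with that adjustment your argument is correct and matches the cited source.
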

\begin{proof}
See \cite{gothen_homological}.
\end{proof}

\begin{corollary}
The category $\Higgs_L$ has homological dimension two.
The Euler characteristic satisfies 
\[\hi(\h E,\h F)=-\ell\rk E\rk F.\]
In particular, $\hi(\h E,\h F)=\hi(\h F,\h E)$.
\end{corollary}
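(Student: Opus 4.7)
The plan is to read both assertions off the long exact sequence of the preceding proposition, combined with standard facts about coherent sheaves on a smooth curve.

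For the homological dimension, I would first observe that the Gothen construction in \cite{gothen_homological} actually produces, for every $i\ge 0$, an exact segment
\[
\Ext^{i-1}(E,F\ts L)\to\Ext^i(\hE,\hF)\to\Ext^i(E,F)\to\Ext^i(E,F\ts L);
\]
the six-term sequence of the proposition is just the portion for $i\le 2$. Since $X$ is a smooth curve, coherent sheaves on $X$ have projective dimension at most one, so $\Ext^j_{\cO_X}(-,-)$ vanishes for $j\ge 2$. Feeding $i\ge 3$ into the above segment kills both outer terms and hence $\Ext^i_{\Higgs_L}(\hE,\hF)=0$; the $\Ext^2$ term is realized as a quotient of $\Ext^1(E,F\ts L)$ and can be nonzero, giving global dimension exactly two.

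For the Euler form, I would take the alternating sum of dimensions in the six-term sequence. The Higgs Ext terms contribute $\hi(\hE,\hF)$ and the sheaf Ext terms telescope to
\[
\hi(\hE,\hF)=\hi_X(E,F)-\hi_X(E,F\ts L),
\]
where $\hi_X$ denotes the Euler form on $\Coh(X)$. By Riemann-Roch on a curve of genus $g$, for locally free $E,F$ we have $\hi_X(E,F)=\rk E\deg F-\rk F\deg E+\rk E\rk F(1-g)$. Tensoring $F$ with $L$ shifts $\deg F$ by $\rk F\cdot \ell$ and preserves rank, so the difference collapses to $\ell\rk E\rk F$, giving $\hi(\hE,\hF)=-\ell\rk E\rk F$. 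The symmetry then falls out of the manifestly symmetric formula. For arbitrary coherent $E,F$ the same identity follows by resolving one argument by a two-term complex of locally free sheaves and using additivity of $\hi_X$ on short exact sequences.

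I do not expect any serious obstacle. The conceptual input is entirely the proposition; the remainder is an exercise in combining the long exact sequence with Riemann-Roch. The one minor point requiring care is justifying the extension of the Gothen sequence past $\Ext^2$, which is automatic from its origin as the hypercohomology spectral sequence of a two-term complex of ordinary sheaf-Hom/Ext groups.
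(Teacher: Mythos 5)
Your proof is correct and follows exactly the route the paper intends: the corollary is stated without proof precisely because it is the alternating-sum/Riemann--Roch consequence of the Gothen--King long exact sequence that you spell out, together with the vanishing of higher sheaf Ext on a curve. The only nitpick is a wording slip where you say ``the difference collapses to $\ell\rk E\rk F$'' while the difference $\hi_X(E,F)-\hi_X(E,F\ts L)$ is already $-\ell\rk E\rk F$; your final formula is nonetheless the right one.
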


Let $\cM_L(r,d)$ (resp.\ $\cM_L^+(r,d)$) be the moduli stack of semistable $L$-Higgs bundles of type $(r,d)$ in $\Higgs_L$ (resp.\ in $\Higgs_L^+$).
Their expected dimension is $-\hi(\hE,\hE)=\ell r^2$, where $\hE\in\cM_L(r,d)$.
Define motivic invariants (if counting points over a finite field $\bF_q$, we use $\bL^\oh=-q^\oh$)
\eq{\cI_L(r,d)=\bL^{-\oh\ell r^2}\mot{\cM_L(r,d)},}
where $\mot{X/\GL_n}:=\mot{X}/\mot{\GL_n}$ for a global stack $X/\GL_n$.
Define motivic DT invariants (see \eg \cite{mozgovoy_computational} for the definition of plethystic operations)
\eq{\label{eq:oml}
\sum_{d/r=\ta}\frac{\Om_L(r,d)}\lf t^r
=\Log\rbr{1+\sum_{d/r=\ta} \cI_L(r,d)t^r},\qquad \ta\in\bQ.}
Note that if $r,d$ are coprime then \eq{\Om_L(r,d)=\lfb\cI_L(r,d)=
\bL^{-\oh\dim M_L(r,d)}\mot{M_L(r,d)},}
where $M_L(r,d)$ is the moduli space of semistable Higgs bundles having rank $r$ and degree $d$.
Similarly, we define invariants $\cI^+_L(r,d)$ and $\Om^+_L(r,d)$ corresponding to the stacks $\cM_L^+(r,d)$.
%$$\cI_L^+(r,d)=(-q^\oh)^{-lr^2}[\cM_L^+(r,d)]$$.
%$$\sum_{d/r=\ta}\frac{\Om_L^+(r,d)}{q-1}t^r
%=\Log\rbr{\sum_{d/r=\ta} \cI_L^+(r,d)t^r}.$$

\begin{remark}
Computations suggest that $\Om_L(r,d)$ are independent of $d$.
Note that $\cI_L(r,d+r)=\cI_L(r,d)$ and therefore
$\Om_L(r,d+r)=\Om_L(r,d)$.
The conjectural formula for $\cI_L(r,d)$ is given in \cite{mozgovoy_solutions}.
Invariants $\Om_L^+(r,d)$ can depend on $d$ but they stabilize for large $d$.
\end{remark}

%[MS, 3.3]
\begin{proposition}
\label{prp:+vs all}
If $d>\ell\binom r2$ then
\begin{enumerate}
\item $\cM_L^+(r,d)=\cM_L(r,d)$,
\item $\cI_L^+(r,d)=\cI_L(r,d)$,
\item $\Om_L^+(r,d)=\Om_L(r,d)$.
\end{enumerate}
\end{proposition}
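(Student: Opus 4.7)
The heart of the proof is the slope bound
\[\mu_{\min}(E)\ge \mu(E)-\tfrac{\ell(r-1)}{2}\]
for every semistable $L$-Higgs bundle $(E,\vi)$ of rank $r$. Granted this, part (1) is immediate: $d>\ell\binom r2$ forces $\mu(E)>\ell(r-1)/2$, so $\mu_{\min}(E)>0$ and $(E,\vi)\in\Higgs_L^+$, which gives $\cM_L(r,d)\sbs\cM_L^+(r,d)$ (the reverse inclusion is formal). Part (2) is then the equality of motives of the same stack, and part (3) follows formally because the $t^r$-coefficient of $\Log$ in the definition of $\Om_L$ involves only $\cI_L(r',\ta r')$ with $r'\le r$ at a fixed slope $\ta=d/r$, and the hypothesis $\ta>\ell(r-1)/2$ forces $\ta r'>\ell\binom{r'}{2}$ for every such $r'$, allowing (1) to propagate.

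For the slope bound, write the HN filtration of the underlying bundle as $0=E_0\sbs E_1\sbs\dots\sbs E_n=E$ with slopes $\mu_1>\dots>\mu_n$ and ranks $r_j=\rk(E_j/E_{j-1})$. Let $j^\ast$ be the largest index $0\le j<n$ for which $E_j$ is a Higgs subsheaf of $(E,\vi)$ (take $j^\ast=0$ if none exists). Semistability of $(E,\vi)$ gives $\mu(E_{j^\ast})\le\mu(E)$, so the Higgs quotient $F=E/E_{j^\ast}$ has $\mu(F)\ge\mu(E)$; by maximality of $j^\ast$ no proper intermediate HN term $E_k/E_{j^\ast}$ of $F$ (for $j^\ast<k<n$) is a Higgs subsheaf of $F$, else $E_k$ would already be Higgs-invariant in $E$. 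Consequently, for every such $k$ the composite
\[E_k/E_{j^\ast}\emb F\xto{\bar\vi}F\ts L\twoheadrightarrow(E/E_k)\ts L\]
is nonzero, and the vanishing of $\Hom$ between sheaves with $\mu_{\min}$ of source exceeding $\mu_{\max}$ of target then forces $\mu_k\le\mu_{k+1}+\ell$. Telescoping gives $\mu_k-\mu_n\le(n-k)\ell$, whence
\[r(F)\bigl(\mu(F)-\mu_n\bigr)=\sum_{k>j^\ast}r_k(\mu_k-\mu_n)\le\ell\sum_{k>j^\ast}r_k(n-k)\le\ell\tbinom{r(F)}{2},\]
the final bound being the elementary combinatorial maximum of $\sum r_k(n-k)$ under $\sum r_k=r(F)$, $r_k\ge1$ (attained when all $r_k=1$). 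Dividing by $r(F)$ and using $\mu(F)\ge\mu(E)$ together with $r(F)\le r$ yields the slope bound.

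The main obstacle is the possibility of a proper Higgs-invariant $E_j$: at such an index the slope-chain inequality $\mu_j\le\mu_{j+1}+\ell$ may fail, breaking any naive telescoping on the HN filtration of $E$ itself. The resolution is precisely to pass to $F=E/E_{j^\ast}$ for the \emph{largest} such index, where the remaining HN terms are guaranteed to be non-Higgs-invariant and the slope chain is restored; semistability of $(E,\vi)$ is used exactly once, to transfer the bound back to $E$ via $\mu(F)\ge\mu(E)$.
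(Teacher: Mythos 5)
Your argument is correct, but it takes a genuinely different route on the only substantive point: the paper disposes of part (1) by citing \cite{mozgovoy_counting} and then derives (2) trivially and (3) by the formal observation that every term $\cI_L(r',d')$ entering the slope-$\ta$ logarithm satisfies $d'>\ell\binom{r'}2$; you instead supply a self-contained proof of (1) via the slope estimate $\mu_{\min}(E)\ge\mu(E)-\ell(r-1)/2$ for semistable Higgs bundles. Your chain $\mu_k\le\mu_{k+1}+\ell$ extracted from the non-vanishing of the induced maps $E_k/E_{j^\ast}\to(E/E_k)\ts L$, the passage to $F=E/E_{j^\ast}$ for the largest Higgs-invariant step (which is where semistability enters, via $\mu(F)\ge\mu(E)$), and the combinatorial bound $\sum_k r_k(n-k)\le\binom{r(F)}2$ (the same inequality the paper proves inside Lemma \ref{lmm:d>>0}) all check out; this is essentially the argument of the cited reference, and what it buys is that the Proposition no longer rests on an external result, while your treatment of (3) coincides with the paper's. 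One small caveat: the reverse inclusion $\cM_L^+(r,d)\sbs\cM_L(r,d)$ is not purely formal if, as the Harder--Narasimhan recursion of Proposition \ref{prp:jl+ vs il+} requires, semistability in $\Higgs_L^+$ is tested only against subobjects lying in $\Higgs_L^+$; but it follows from the same slope bound applied to a maximal destabilizing Higgs subsheaf $E'\sbs E$, which would satisfy $\mu(E')>\mu(E)>\ell(\rk E'-1)/2$ and hence be positive, so it would already destabilize inside $\Higgs_L^+$. You should make that one extra application explicit rather than calling it formal.
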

\begin{proof}
1) is proved in \cite{mozgovoy_counting}.
2) follows from 1).
To prove 3), we note that if $d'/r'=d/r$ and $r'\le r$, then
$d'/r'=d/r>\ell\frac{r-1}2\ge\ell\frac{r'-1}2$
and therefore $d'>\ell\binom{r'}2$.
This implies that $\cI^+_L(r',d')=\cI_L(r',d')$.
Now we apply equation \eqref{eq:oml} and a similar equation for $\Om_L^+(r,d)$ and conclude that $\Om_L(r,d)=\Om_L^+(r,d)$.
\end{proof}

As $\cI_L(r,d+r)=\cI_L(r,d)$, we can determine invariants $\cI_L(r,d)$ if we know $\cI_L^+(r,d)$ for $d\gg0$.
Let $\cJ_L^+(r,d)$ denote the weighted count of all positive Higgs bundles (with the twist $\bL^{-\oh\ell r^2}$ as before), that is,
\eq{\cJ_L^+(r,d)=\bL^{-\oh\ell r^2}\sum_{\ov{\cha E=(r,d)}{E-\text{pos.}}}\frac{\mot{\Hom(E,E\ts L)}}{\mot{\Aut E}}.}

%MS
\begin{proposition}\label{prp:jl+ vs il+}
If $\ell\ge 2g-2$, then
\begin{gather}
1+\sum_{r,d>0}\cJ_L^+(r,d)z^rw^d
=\prod_{\ta>0}\rbb{1+\sum_{d/r=\ta}\cI_L^+(r,d)z^rw^d},\\
\label{eq:omp1}
\Log\rbb{1+\sum_{r,d>0} \cJ^+_L(r,d)z^rw^d}
=\sum_{r,d>0}\frac{\Om_L^+(r,d)}\lf z^rw^d.
\end{gather}
\end{proposition}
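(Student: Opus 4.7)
The plan is to establish the product formula first and derive \eqref{eq:omp1} as a formal consequence. By the obvious positive analogue of \eqref{eq:oml}, the invariants $\Om_L^+(r,d)$ are defined through
\eq{\sum_{d/r=\ta}\frac{\Om_L^+(r,d)}\lf z^rw^d=\Log\Big(1+\sum_{d/r=\ta}\cI_L^+(r,d)z^rw^d\Big),\qquad\ta\in\bQ.}
Applying $\Log$ to both sides of the product formula, using that $\Log$ converts products into sums, and summing the identity above over $\ta>0$, one immediately obtains \eqref{eq:omp1}. So only the product formula requires real work.

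For the product formula, my approach is the standard Reineke--Joyce Harder--Narasimhan stratification of the moduli stack of positive Higgs bundles. Every $\hE\in\Higgs_L^+$ admits a unique HN filtration $0=\hE_0\sbs\cdots\sbs\hE_k=\hE$ in $\Higgs_L$ whose subquotients $\hF_i=\hE_i/\hE_{i-1}$ are semistable Higgs bundles of strictly decreasing Higgs slopes $\mu_1>\cdots>\mu_k>0$. Granting that this filtration lives inside $\Higgs_L^+$ (the crucial input, see below), a standard motivic Hall algebra computation stratifies $\cJ_L^+$ by HN type $\tau=((r_i,d_i))_{i=1}^k$, expresses the motive of the stratum as $\prod_i\mot{\cM_L^{+,ss}(r_i,d_i)}$ twisted by a power of $\bL$ coming from the extensions, and uses the Euler form $\hi(\hF_i,\hF_j)=-\ell r_ir_j$ (together with the $\Ext^2$-vanishing ensured by $\ell\ge 2g-2$, which makes $\om_X\ts L\inv$ of non-positive degree) to see that the extension twist $\bL^{\ell\sum_{i<j}r_ir_j}$ exactly cancels the off-diagonal part of the normalization $\bL^{-\oh\ell r^2}=\bL^{-\oh\ell\sum r_i^2}\cdot\bL^{-\ell\sum_{i<j}r_ir_j}$, leaving $\prod_i\cI_L^+(r_i,d_i)$. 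Summing over HN types with strictly decreasing positive slopes reassembles the right-hand side.

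The main obstacle is the closure of $\Higgs_L^+$ under Higgs-HN subquotients. Closure under Higgs quotients is easy: a quotient of a positive bundle on $\bP^1$ is positive, because $\Hom(\cO(b),\cO(a))=0$ for $b>0\ge a$ forces every line summand of such a quotient to have positive degree, so each $\hE/\hE_{i-1}$ lies in $\Higgs_L^+$. The subtler point is that the top HN piece $\hF_i\sbs\hE/\hE_{i-1}$ is a Higgs subbundle of a positive Higgs bundle, and subbundles of positive bundles on $\bP^1$ need not be positive (for instance $\cO\hookrightarrow\cO(1)^{\oplus 2}$). One has to exploit the $\vi$-invariance and maximal-destabilizing role of $\hF_i$ together with the explicit line-bundle decomposition of bundles on $\bP^1$ to rule out non-positive direct summands of $F_i$; I expect this to reduce to the observation that for positive $E$ on $\bP^1$ one automatically has $\mu(\hE)\ge 1$, which, combined with Higgs-semistability of $\hF_i$, forces each $F_i$ to be positive. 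This is the $\bP^1$-specific heart of the argument.
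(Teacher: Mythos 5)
Your route is the same as the paper's: the proof given there consists of the single assertion that the first identity ``is the consequence of the existence of Harder--Narasimhan filtrations in the category $\Higgs^+_L$'', and the second identity is derived from the first via the definition \eqref{eq:oml} of the DT invariants, exactly as you do. The bookkeeping you supply on top of this (the $\Ext^2$-vanishing between HN factors, which follows from $\deg(\om_X\ts L\inv)=2g-2-\ell\le0$ and Serre duality in the Higgs category, and the cancellation of the extension twist $\bL^{\ell\sum_{i<j}r_ir_j}$ against the normalization $\bL^{-\oh\ell r^2}$ using the symmetry of $\hi$) is correct and is precisely what the paper leaves implicit.

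There is, however, a gap at exactly the point you flag, and the reduction you propose for closing it does not work. You suggest that positivity of each HN factor $\hF_i$ should follow from ``$\mu(\hE)\ge1$ combined with Higgs-semistability of $\hF_i$''. But a Higgs-semistable bundle of slope $\ge1$ need not be positive: for $\ell\ge2$ take $F=O\oplus O(\ell)$ on $\bP^1$ with a Higgs field $\vi$ whose component $O(\ell)\to O\ts L$ is an isomorphism. Every subsheaf of slope $>\ell/2$ is contained in the summand $O(\ell)$ and is therefore not $\vi$-invariant, so $(F,\vi)$ is Higgs-semistable of slope $\ell/2\ge1$, yet $F$ is not positive. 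So a slope bound plus Higgs-semistability is not enough; one must genuinely use that $\hF_i$ is the maximal destabilizing Higgs subsheaf of the positive quotient $\hE/\hE_{i-1}$, and ruling out non-positive summands of that subsheaf is the real content of the HN-recursion for positive Higgs sheaves (this is carried out in Mozgovoy--Schiffmann, \emph{Counting Higgs bundles}, which the paper cites for the companion statement in Proposition \ref{prp:+vs all}). Note also that the proposition is stated for an arbitrary curve of genus $g$ with $\ell\ge2g-2$, whereas your treatment of the key lemma is $\bP^1$-specific: $\mu(\hE)\ge1$ and the line-bundle decomposition are unavailable in general, so the argument should be phrased in terms of $\mu_{\min}$ of the underlying bundle rather than through direct summands.
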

\begin{proof}
The first equation is the consequence of the existence of Harder-Narasimhan filtrations in the category $\Higgs^+_L$.
The second equation follows from the first one and the definition of DT invariants.
\end{proof}

Our goal is to determine invariants $\cJ^+_L(r,d)$ (for the projective line), and then to compute DT-invariants $\Om_L(r,d)$ using the sequence:
\[\cJ^+_L(r,d)\to\Om^+_L(r,d)\to \Om_L(r,d).\]

\subsection{Projective line}
We will give an explicit formula for the invariants $\cJ^+_L(r,d)$ in the case of the curve $\bP^1$.
As before, for any $m:\bZ\to\bN$ with finite support, define
\eq{r(m)=\sum_{i\in\bZ} m_i,\qquad d(m)=\sum_{i\in\bZ}im_i.}
We can write the Euler-Ringel form for the quiver $Q$ defined in the introduction as
\eq{\label{euler}
\hi(m,m')=\sum_i m_im'_i-\sum_{\n{i-j}\le \ell}m_im_j(\ell+1-\n{i-j}),}
where $m,m':\bZ\to\bN$ have finite support.
%For any $\al,\be:\bZ\to\bN$ with finite support, define
%\[r(\al)=\sum\al_i,\qquad d(\al)=\sum i\al_i,\]
%\[\hi(\al,\be)=\sum\al_i\be_i-\sum_{\n{i-j}\le l}\al_i\be_j(l+1-\n{i-j}),\]
Finally, define the $q$-Pochhammer symbol
\eq{(q)_m=\prod_{i\in\bZ}(q)_{m_i},\qquad (q)_n=(q;q)_n=\prod_{i=1}^n(1-q^i).}

\begin{theorem}\label{th:jl+}
We have
\eq{\label{eq:jl+}
\cJ^+_L(r,d)
=\sum_{\ov{m:\bZ_{>0}\to\bN}{r(m)=r,d(m)=d}}
\frac{\bL^{-\oh\hi(m,m)}}{(\bL\inv)_m}.
}
\end{theorem}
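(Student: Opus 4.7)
The plan is to unpack $\cJ^+_L(r,d)$ directly on $\bP^1$ using the Birkhoff--Grothendieck decomposition, compute each factor in the summand explicitly, and then check the resulting $\bL$-exponent matches $-\oh\hi(m,m)$ by an elementary combinatorial identity.

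First, any vector bundle on $\bP^1$ splits uniquely as $E=\bigoplus_i O_X(i)^{\oplus m_i}$ for some finitely supported $m:\bZ\to\bN$; since the HN filtration of such an $E$ just groups summands of equal slope, positivity is equivalent to $m_i=0$ for $i\le 0$. The condition $\cha E=(r,d)$ becomes $r(m)=r,\,d(m)=d$, so the two indexing sets agree, reducing the problem to computing the weight $\mot{\Hom(E,E\ts L)}/\mot{\Aut E}$ summand-by-summand.

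Second, using $\Hom_{\bP^1}(O(i),O(j+\ell))\cong\bk^{(\ell+j-i+1)^+}$ with $L=O(\ell)$, the numerator becomes
\[
\mot{\Hom(E,E\ts L)}=\bL^{A(m)},\qquad A(m)=\sum_{i-j\le\ell}m_im_j(\ell+1-(i-j)).
\]
For the denominator, $\Aut E$ is an extension of $\prod_i\GL_{m_i}$ by its unipotent radical (the off-diagonal part, supported on pairs $i<j$) of dimension $B(m)=\sum_{i<j}m_im_j(j-i+1)$, so
\[
\mot{\Aut E}=\bL^{B(m)}\prod_i\mot{\GL_{m_i}}=\bL^{B(m)+\sum m_i^2}\prod_i(\bL\inv)_{m_i},
\]
using $\mot{\GL_n}=\bL^{n^2}(\bL\inv)_n$. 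Thus the weight equals $\bL^{A(m)-B(m)-\sum m_i^2}/(\bL\inv)_m$.

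Finally, to match \eqref{eq:jl+} it suffices to verify the identity
\[
-\tfrac12\ell r(m)^2+A(m)-B(m)-\textstyle\sum_im_i^2=-\tfrac12\hi(m,m).
\]
This is a direct expansion: splitting both $A(m)$ and the $|i-j|\le\ell$ sum in \eqref{euler} into diagonal and off-diagonal contributions (and using $r(m)^2=\sum_im_i^2+2\sum_{i<j}m_im_j$), the asymmetric range $i-j\le\ell$ on the Higgs side symmetrizes against $|i-j|\le\ell$ on the quiver side once combined with $-\oh\ell r^2$, and all coefficients align after bookkeeping. No genuine obstruction arises beyond this bookkeeping; the main point requiring care is that $A(m)$ ranges asymmetrically over $i-j\le\ell$ (so $j$ can be arbitrarily larger than $i$), whereas $\hi(m,m)$ uses the symmetric range $|i-j|\le\ell$, and the difference is absorbed into $-B(m)-\sum m_i^2+\oh\ell r^2$.
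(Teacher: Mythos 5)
Your proposal is correct and follows essentially the same route as the paper: Birkhoff--Grothendieck decomposition, positivity read off from the splitting type, the motivic weight $\mathbb{L}^{-\frac12\ell r^2}\,[\operatorname{Hom}(E,E\otimes L)]/[\operatorname{Aut} E]$ computed termwise, and an elementary check that the resulting exponent equals $-\frac12\chi(m,m)$. The only cosmetic differences are that you derive $[\operatorname{Aut} E]$ from the Levi/unipotent-radical decomposition where the paper cites it, and you organize the exponent bookkeeping by diagonal versus off-diagonal terms rather than by pairing $(i,j)$ with $(j,i)$; both verifications amount to the same identity.
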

\begin{proof}
Given a vector bundle $E$ over $X=\bP^1$, we can write it in the form $E=\bop_{i\in\bZ}O_X(i)^{\oplus m_i}$, where $m:\bZ\to\bN$ has a finite support.
Moreover,
\[\rk E=r(m),\qquad \deg E=d(m).\]
The vector bundle $E$ is positive (\ie its Harder-Narasimhan factors have slopes $>0$) if and only if $m_i=0$ for $i\le0$.
Therefore we can consider $m$ as a map $\bZ_{>0}\to\bN$.
The contribution of $E$ to $\cJ^+_L(r,d)$ equals \cite{mozgovoy_motivicb}
\[\bL^{-\oh\ell r^2}\frac{\mot{\Hom(E,E\ts L}}{\mot{\Aut E}}
=\bL^{-\oh \ell r^2}\frac{\mot{\Hom(E,E\ts L}}{\mot{\End E}\cdot(\bL\inv)_m}.\]
The power of $\bL^\oh$ here is given by (we use $h^i(E,F):=\dim\Ext^i(E,F)$)
\begin{multline*}
-\ell r^2+2h^0(E,E\ts L)-2h^0(E,E)\\
=\sum_{i,j}(-\ell+2[i-j+\ell+1]_+-2[i-j+1]_+)m_im_j,
\end{multline*}
where $[n]_+=\max\set{0,n}$.
Consider the sum of contributions corresponding to pairs $(i,j)$ and $(j,i)$ with $i>j$.
If $0<i-j\le \ell$, then the sum is
%$$(-l+2l)+(-l+2(j-i+l+1))=2(l+1-\n{i-j})$$
$$-2\ell+4(\ell+1)-2(i-j+1)=2(\ell+1-\n{i-j}).$$
If $i-j>\ell$, then the sum is
\[-2\ell+2\ell=0.\]
If $i=j$, the contribution is
\[-\ell+2\ell=(\ell+1-\n{i-j})-1.\]
These computations imply that the power of $\bL^\oh$ is equal to
\[\sum_{\n{i-j}\le \ell}m_im_j(\ell+1-\n{i-j})-\sum_im_i^2=-\hi(m,m)\]
and the contribution of $E$ to $\cJ^+_L(r,d)$ is
$\bL^{-\oh\hi(m,m)}/(\bL\inv)_m$.
Finally, we take the sum over all $m:\bZ_{>0}\to\bN$ that correspond to positive vector bundles of rank $r$ and degree $d$.
%The formula of the theorem is obtained by considering all vector bundles $E$ having rank $r$ and degree $d$, that is, all maps $\al:\bN^+\to\bN$ with $r(\al)=r$ and $d(\al)=d$.
\end{proof}

\begin{remark}
This theorem gives an effective way to compute invariants $\Om_L(r,d)$:
\begin{enumerate}
\item Compute invariants $\cJ_L^+(r,d)$ using \eqref{eq:jl+}.
\item Compute invariants $\Om_L^+(r,d)$ using \eqref{eq:omp1}.
\item Use equation $\Om_L(r,d)=\Om_L^+(r,d)$ for $d>\ell\binom r2$.
\end{enumerate}
\end{remark}

%\begin{corollary}
%We have
%\eq{
%\sum_{r,d>0}\frac{\Om_L^+(r,d)}\lf z^{(r,d)}
%=\Log\rbb{1+\sum_{m:\bZ_{>0}\to\bN} \frac{\bL^{-\oh\hi(m,m)}}\lf z^{(r(m),d(m))}}
%.}
%\end{corollary}

\section{Relation to quivers}
Let $Q$ be the infinite quiver defined in the introduction.
%with the set of vertices $Q_0=\bZ$ and the number of arrows from $i\in Q_0$ to $j\in Q_0$ equal
%\[\max\set{0,l+1-\n{i-j}}.\]
%Then the Euler characteristic between two dimension vectors $\al,\be:\bZ\to\bN$ is given by $\hi(\al,\be)$ defined above.
%As $Q$ is symmetric, the numbers of semistable quiver representations are independent of the stability parameter.
For any dimension vector $m:\bZ\to\bN$, let $\cJ_Q(m)$ denote the weighted count of all representations having dimension vector $m$.
More precisely, we consider the stack of all quiver representations $\ma_Q(m)=R_Q(m)/\GL_m$, where
\eq{R_Q(m)=\bop_{(a:i\to j)\in Q_1}\Hom(\bk^{m_i},\bk^{m_j}),\qquad
\GL_m=\prod_{i\in\bZ}\GL_{m_i}.}
Then $\dim\ma_Q(m)=-\hi(m,m)$ and \cite{mozgovoy_motivic}
\eq{\label{eq:jq}
\cJ_Q(m)
=\bL^{-\oh\dim\ma_Q(m)}[\ma_Q(m)]
=\frac{\bL^{-\oh\hi(m,m)}}{(\bL\inv)_m}.
}
We define Donaldson-Thomas invariants $\Om_Q(m)$ by the formula
\eq{\label{eq:omq}
\sum_{m:\bZ\to\bN}\frac{\Om_Q(m)}\lf z^m=\Log\rbr{\sum_{m:\bZ\to\bN}\cJ_Q(m)z^m}.}

Similarly, consider the stack $\cM_Q(m)=R_Q^\sst(m)/\GL_m$ of semistable quiver representations, where $R_Q^\sst(m)\sbs R_Q(m)$ is the subspace of semistable representations,
and define
\eq{\cI_Q(m)
=\bL^{-\oh\dim\cM_Q(m)}[\cM_Q(m)].
%=\frac{\bL^{-\oh\hi(m,m)}}{(\bL\inv)_m}.
}
\begin{proposition}
We have
\begin{gather}
\sum_{m:\bZ\to\bN}\cJ_Q(m)z^{m}
=\prod_{\ta\in\bQ}\rbb{1+\sum_{\ov{m:\bZ\to\bN}{d(m)/r(m)=\ta}}\cI_Q(m)z^{m}},\\
\sum_{\ov{m:\bZ\to\bN}{d(m)/r(m)=\ta}}\frac{\Om_Q(m)}\lf z^{m}
=\Log\rbb{1+\sum_{\ov{m:\bZ\to\bN}{d(m)/r(m)=\ta}} \cI_Q(m)z^{m}}.
\end{gather}
\end{proposition}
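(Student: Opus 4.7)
The plan is to deduce both identities from a Harder--Narasimhan (HN) stratification of the stack $\ma_Q(m)$, in direct parallel with Proposition~\ref{prp:jl+ vs il+} for Higgs bundles.

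For the first identity, every $Q$-representation admits a unique HN filtration whose successive quotients are semistable of strictly decreasing slopes. Indexing HN-types by tuples $(m^{(1)},\dots,m^{(s)})$ with $m=\sum_k m^{(k)}$ and $\mu(m^{(1)})>\cdots>\mu(m^{(s)})$, the corresponding stratum $\ma_Q^{\mathrm{HN}}(m)\sbs\ma_Q(m)$ fibres as an iterated affine bundle over $\prod_k\cM_Q(m^{(k)})$, the fibres recording the extension data at each level. The standard Reineke-type bookkeeping then gives
\[
\mot{\ma_Q^{\mathrm{HN}}(m)}=\bL^{-\sum_{i<j}\hi(m^{(i)},m^{(j)})}\prod_k\mot{\cM_Q(m^{(k)})}.
\]
Rescaling by $\bL^{\oh\hi(m,m)}$ to convert $\mot{\cdot}$ into the normalised $\cJ_Q$ and $\cI_Q$, I would invoke \emph{symmetry} of the Euler--Ringel form of $Q$ --- which yields $\oh\hi(m,m)=\oh\sum_k\hi(m^{(k)},m^{(k)})+\sum_{i<j}\hi(m^{(i)},m^{(j)})$ --- so that all $\bL$-twists cancel. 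Consequently
\[
\cJ_Q(m)=\sum_{\text{HN-types}}\prod_k\cI_Q(m^{(k)}),
\]
which, once the summation is reorganised as a product indexed by the slope parameter $\ta\in\bQ$ in decreasing order, is precisely the first identity.

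For the second identity I would apply the plethystic $\Log$ to the first. Multiplicativity $\Log(FG)=\Log(F)+\Log(G)$ converts the product on the right into $\sum_\ta\Log\bigl(1+\sum_{d(m)/r(m)=\ta}\cI_Q(m)z^{m}\bigr)$, while definition~\eqref{eq:omq} identifies the $\Log$ of the left-hand side with $\sum_m(\Om_Q(m)/\lf)z^{m}$. The plethystic operations $p_n\colon z^m\mapsto z^{nm}$ preserve the slope $d(m)/r(m)$, so both $\Exp$ and $\Log$ respect the slope grading; each $\ta$-summand on the right therefore consists purely of slope-$\ta$ monomials, and matching slope-$\ta$ components on the two sides yields the claimed equality.

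The main technical point is the motivic HN-stratum formula invoked in the first step: one must verify that the extension fibres are genuine affine bundles (so the motivic class really factors) and that the $\bL$-exponent is exactly $-\sum_{i<j}\hi(m^{(i)},m^{(j)})$. Both are well established for quiver stacks and I would cite them rather than reprove them. The essential new input is simply that symmetry of $\hi$ reassembles these twists into $\oh\hi(m,m)$ and makes them cancel against the normalisations in $\cJ_Q$ and $\cI_Q$, collapsing the HN-sum into an untwisted product indexed by slope.
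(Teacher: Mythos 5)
Your proposal is correct and follows essentially the same route as the paper: the paper simply says the proof is analogous to Proposition~\ref{prp:jl+ vs il+}, i.e.\ the first identity is the standard consequence of Harder--Narasimhan filtrations (with the $\bL$-twists cancelling thanks to symmetry of $\hi$, exactly as you compute) and the second follows by applying $\Log$ and matching slope-graded components via definition~\eqref{eq:omq}. Your write-up just makes explicit the Reineke-type stratum bookkeeping that the paper leaves implicit.
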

\begin{proof}
The proof is similar to the proof of Proposition \ref{prp:jl+ vs il+}.
\end{proof}

\begin{lemma}\label{lmm:d>>0}
Let $m:\bZ\to\bN$ be a dimension vector such that $d(m)>\ell\binom{r(m)}2$.
If there exist semistable representations of dimension vector $m$, then $m_i=0$ for $i\le0$.
\end{lemma}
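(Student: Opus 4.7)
The plan is to transport to the quiver side the same mechanism that proves Proposition~\ref{prp:+vs all} on the bundle side: identify the correct combinatorial analogue of the gap bound $\mu_k-\mu_{k+1}\le\ell$ on the HN slopes of the underlying bundle, and then run the same extremal count.

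The first and most substantial step is a connectedness statement for the support. Let $V$ be a semistable representation with $\udim V=m$, and enumerate $\{j\in\bZ:m_j>0\}$ as $j_1<j_2<\dots<j_n$. I claim that $j_{k+1}-j_k\le\ell$ for every $k$. Suppose not; then for some $k$ the gap is at least $\ell+1$, and by the arrow-count formula $\max\{\ell+1-|i-j|,0\}$ the quiver $Q$ has no arrows between the vertex sets $\{j_1,\dots,j_k\}$ and $\{j_{k+1},\dots,j_n\}$. Any $V$ with support straddling this gap must therefore split as a $Q$-representation $V=V_1\oplus V_2$ along it. Both summands are subrepresentations, so semistability gives $\mu(V_1),\mu(V_2)\le\mu(V)$; but $\mu(V)$ is a convex combination of $\mu(V_1)$ and $\mu(V_2)$, so in fact $\mu(V_1)=\mu(V_2)=\mu(V)$. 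However $\mu(V_1)\le j_k<j_{k+1}\le\mu(V_2)$, a contradiction.

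The remaining step is an extremal count. From the connectedness above, $j_k\le j_1+(k-1)\ell$, hence
\[d=\sum_{k=1}^n j_k\,m_{j_k}\ \le\ j_1\,r+\ell\sum_{k=1}^n(k-1)m_{j_k}.\]
Among all data $(n,(m_k)_{k=1}^n)$ with $n\ge 1$, $m_k\ge 1$ and $\sum_k m_k=r$, the sum $\sum(k-1)m_k$ is maximised by spreading out as thinly as possible, namely $n=r$ and $m_k=1$ for every $k$, yielding $\sum_{k=1}^r(k-1)=\binom{r}{2}$. Thus $d\le j_1r+\ell\binom{r}{2}$; if $j_1\le 0$ this contradicts the hypothesis $d>\ell\binom{r}{2}$, so $j_1\ge 1$ and $m_j=0$ for all $j\le 0$.

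The only step with real content is the connectedness claim; everything afterwards is a one-line extremal bound. The tempting but unproductive attempt is to try to destabilise $V$ by the subrepresentation generated by a single $V_{i_0}$ with $i_0\le 0$, but its slope is not controlled and one gets no contradiction. Splitting across a wide gap is cleaner because it produces honest direct summands, whose slopes are forced by their supports alone.
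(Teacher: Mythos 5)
Your proof is correct and follows essentially the same route as the paper's: bound each gap in the support by $\ell$ via the direct-sum splitting that would contradict semistability, then apply the extremal estimate $\sum_k(k-1)m_{j_k}\le\binom{r}{2}$ to conclude $j_1>0$. The only difference is cosmetic: the paper justifies that extremal bound by a short induction, whereas you assert it with a (true and easily verified) ``spread out as thinly as possible'' heuristic, and you spell out the slope contradiction in the splitting step that the paper leaves implicit.
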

\begin{proof}
Let $V$ be a semistable $Q$-representation of dimension vector $m$ and let
$\supp V=\sets{i\in\bZ}{V_i\ne0}$ be its support.
Let $[a,b]$ be an interval such that $a,b\in \supp V$, but $(a,b)\cap\supp V=\es$ (we call it a gap).
Then it has length $\le\ell$ as otherwise there are no arrows between $a,b$ in $Q$ and $V$ can be decomposed as a direct sum of representations having support in $(-\infty,a]$ and $[b,+\infty)$, which would imply that $V$ is not semistable.
Let $a_1<\dots<a_k$ be the elements of $\supp V$ and let $r_i=m_{a_i}>0$ be the corresponding dimensions (so that $r=r(m)=\sum_i r_i$).
Then $a_i-a_{i-1}\le\ell$ and therefore $a_i\le a_1+(i-1)\ell$.
This implies
$$\frac{r-1}2\ell<\mu(V)=\frac{\sum_{i=1}^k a_ir_i}r
\le\frac{\sum_{i=1}^k(a_1+(i-1)\ell)r_i}r
\le \frac kra_1+\frac{r-1}2\ell
$$
and therefore $a_1>0$.
This implies that $m_i=0$ for $i\le0$.
We used here the fact that if $\sum_i r_i=r$, then $\sum_i(i-1)r_i\le\binom r2$.
Indeed, by induction
$$\sum_i (i-1)r_i
\le\max_{1\le r_1\le r}\rbr{\binom{r-r_1}2+(r-r_1)}
=\max_{1\le r_1\le r}\binom{r-r_1+1}2
=\binom r2.
$$

%See \cite{mozgovoy_counting} for a similar statement about Higgs bundles.
\end{proof}

\begin{theorem}
We have
\eq{\Om_L(r,d)
%=\Om_L^+(r,d)
%=\sum_{\ov{m:\bZ_{>0}\to\bN}{r(m)=r,d(m)=d}}\Om^+_Q(m).
=\sum_{\ov{m:\bZ\to\bN}{r(m)=r,d(m)=d}}\Om_Q(m).}
\end{theorem}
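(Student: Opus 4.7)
The plan is to reduce to the regime $d>\ell\binom{r}{2}$ via shift invariance, and then to match generating functions on the two sides using the effective formula of Theorem~\ref{thm:effective}. Both $\Om_L(r,d)$ and the right-hand side sum $\sum_{r(m)=r,\,d(m)=d}\Om_Q(m)$ are invariant under $d\mapsto d+r$: the former because $\cI_L(r,d+r)=\cI_L(r,d)$, the latter because $m\mapsto m[1]$ is a bijection on the indexing set and $\Om_Q(m[1])=\Om_Q(m)$, as noted in the introduction. So I may assume $d>\ell\binom{r}{2}$, in which case $\Om_L(r,d)=\Om_L^+(r,d)$ by Proposition~\ref{prp:+vs all}.

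I would next derive a generating-function identity for $\Om_Q$ restricted to positively supported dimension vectors $m:\bZ_{>0}\to\bN$. Since subrepresentations of positively supported $Q$-representations are again positively supported, the HN formula restricts to this setting, and applying $\Log$ slope-by-slope yields
\[\sum_{m:\bZ_{>0}\to\bN}\frac{\Om_Q(m)}{\lf}z^m=\Log\Bigl(\sum_{m:\bZ_{>0}\to\bN}\cJ_Q(m)\,z^m\Bigr),\]
using that positively supported dimension vectors form an additive sub-monoid, so ordinary $\log$ and each Adams operation $\psi_k$ preserve the positively supported subring. Applying the ring homomorphism $z_i\mapsto zw^i$, which commutes with $\psi_k$ and hence with $\Log$, and comparing with Theorem~\ref{thm:effective}, the coefficient of $z^rw^d$ gives
\[\Om_L^+(r,d)=\sum_{\substack{m:\bZ_{>0}\to\bN\\ r(m)=r,\,d(m)=d}}\Om_Q(m).\]

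Finally I would eliminate the positivity restriction on $m$. For $d>\ell\binom{r}{2}$ and any $m'$ in the slope $\ta=d/r$ with $r(m')\le r$, one has $d(m')=\ta\,r(m')>\ell\binom{r(m')}{2}$, so Lemma~\ref{lmm:d>>0} forces $\cI_Q(m')=0$ unless $m'$ is positively supported. A second application of the sub-monoid argument to the slope-wise plethystic $\Log$ propagates this vanishing to $\Om_Q(m)=0$ whenever $r(m)=r$, $d(m)=d$, and $m$ is not positively supported, so the positively supported sum equals the unrestricted sum, completing the proof. The main obstacle I expect is the careful handling of the substitution $z_i\mapsto zw^i$ together with the truncation to positively supported $m$: the unrestricted specialization is not a well-defined formal series in $(z,w)$ because a single $(r,d)$ receives contributions from infinitely many translates, so one must work inside the positively supported subring throughout and verify that $\Log$ commutes with this restriction, which ultimately follows from positive support being closed under addition.
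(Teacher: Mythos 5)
Your proof is correct and follows essentially the same route as the paper: it rests on the same ingredients (Theorem~\ref{th:jl+}/Theorem~\ref{thm:effective}, the slope-wise plethystic $\Log$, Lemma~\ref{lmm:d>>0}, Proposition~\ref{prp:+vs all}, and shift invariance), merely performing the shift reduction at the start rather than at the end. The only cosmetic difference is that you identify the positively supported part of $\Om_Q$ directly via the sub-monoid/locality argument for $\Log$, where the paper introduces the auxiliary invariants $\Om_Q^+$ of the subquiver $Q^+$ and compares them with $\Om_Q$ by comparing $\cI_Q^+$ with $\cI_Q$.
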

\begin{proof}
We obtain from Theorem \ref{th:jl+} and equation \eqref{eq:jq}
\eq{\label{eq:jl-q}
\cJ^+_L(r,d)
=\sum_{\ov{m:\bZ_{>0}\to\bN}{r(m)=r,d(m)=d}}\cJ_Q(m).}
Let us define DT invariants $\Om^+_Q(m)$ of the subquiver $Q^+$ of $Q$ with vertices $\bZ_{>0}$ by the formula
\eq{\sum_{m:\bZ_{>0}\to\bN}\Om^+_Q(m)z^m
=\lfb\Log\rbb{\sum_{m:\bZ_{>0}\to\bN}\cJ_Q(m)z^m}.}
Then we obtain from \eqref{eq:omp1} and \eqref{eq:jl-q}
\eq{
\Om_L^+(r,d)=\sum_{\ov{m:\bZ_{>0}\to\bN}{r(m)=r,d(m)=d}}\Om^+_Q(m).
}
Our goal is to extend this equation to arbitrary $L$-Higgs bundles and $Q$-represen\-ta\-tions. 
Invariants $\Om^+_Q(m)$ can be also defined by the formula
(\cf Proposition \ref{prp:jl+ vs il+})
\eq{\sum_{\ov{m:\bZ\to\bN}{d(m)/r(m)=\ta}}
\Om^+_Q(m)z^{m}
=\lfb\Log\rbb{1+\sum_{\ov{m:\bZ\to\bN}{d(m)/r(m)=\ta}} \cI^+_Q(m)z^{m}},}
where $\cI^+_Q(m)=\cI_Q(m)$ if $m_i=0$ for $i\le0$ and $\cI^+_Q(m)=0$ otherwise (it counts semistable representations of $Q^+$).
We obtain from Lemma \ref{lmm:d>>0} that $\cI^+_Q(m)=\cI_Q(m)$ if $d(m)>\ell\binom{r(m)}2$.
This implies, similarly to Proposition \ref{prp:+vs all}, that $\Om^+_Q(m)=\Om_Q(m)$ if $d(m)>\ell\binom{r(m)}2$.
Finally, we obtain for $d>\ell\binom r2$,
\eq{
\Om_L(r,d)
=\Om_L^+(r,d)
=\sum_{\ov{m:\bZ_{>0}\to\bN}{r(m)=r,d(m)=d}}\Om^+_Q(m)
=\sum_{\ov{m:\bZ\to\bN}{r(m)=r,d(m)=d}}\Om_Q(m).
}
Using shifts we can see that the same formula is true for all $r,d$.
\end{proof}

\begin{corollary}
$\Om_L(r,d)$ is a polynomial in $\bL^{\pm\oh}$.
\end{corollary}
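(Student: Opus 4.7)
The plan is to deduce the corollary directly from the preceding theorem, combined with Efimov's integrality result for symmetric quivers. The first step would be to exploit the shift invariance $\Om_L(r,d+r)=\Om_L(r,d)$: replacing $d$ by $d+kr$ for large enough $k$, we may assume without loss of generality that $d>\ell\binom r2$, which places us in the range where the ``moreover'' clause of the theorem is available.

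In that range, I would write
\[\Om_L(r,d)=\sum_{\ov{m:\bZ\to\bN}{r(m)=r,d(m)=d}}\Om_Q(m),\]
and then use the vanishing $\Om_Q(m)=0$ whenever $m$ has a nonzero entry at some $i\le 0$ to restrict the sum to dimension vectors $m:\bZ_{>0}\to\bN$. For such $m$, the constraints $m_i\ge 0$ and $\sum_i i m_i = d$ force $m_i\le d/i$, so $m_i=0$ for $i>d$. Hence only finitely many $m$ contribute to the sum.

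To conclude, I would invoke the result of Efimov \cite{efimov_cohomological} (already cited in the paper immediately after Theorem \ref{thm:effective}), which asserts that for a symmetric quiver the DT invariants $\Om_Q(m)$ are polynomials in $\bL^{\pm\oh}$. A finite $\bZ$-linear combination of such polynomials is again a polynomial, yielding the claim. The only genuinely new step is the shift-and-finiteness reduction; everything else is packaged in the preceding theorem and in Efimov's result, so I do not anticipate a serious obstacle here.
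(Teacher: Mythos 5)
Your proposal is correct and takes essentially the same approach as the paper: apply the identity $\Om_L(r,d)=\sum_m\Om_Q(m)$ from the preceding theorem and invoke Efimov's result that $\Om_Q(m)$ is a polynomial in $\bL^{\pm\oh}$ for a symmetric quiver. The paper's proof is just the one-line citation of Efimov; your shift-and-finiteness reduction merely makes explicit the fact (already noted in the introduction) that only finitely many $m$ contribute to the sum.
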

\begin{proof}
Invariants $\Om_Q(m)$ are polynomials in $\bL^{\pm\oh}$ be the result of Efimov \cite{efimov_cohomological} about DT invariants of symmetric quivers.
%In particular,  are polynomials in $\bL^{\pm\oh}$.
%Now we apply the previous theorem.
\end{proof}

\section{Examples}
As I mentioned earlier, all tests show that invariants $\Om_L(r,d)$ are independent of $d$.
Let us denote them by $\Om_\ell(r)$, where $\ell=\deg L$.
To simplify notation, we will use a variable $w=\bL^\oh$.
Note that if $r,d$ are coprime, then the moduli space $Y=M_L(r,d)$ is smooth of dimension $\ell r^2+1$ and
\eq{\Om_L(r,d)=\vmot Y=w^{-\dim Y}\sum_{n\ge0}\dim H_c^n(Y,\bQ)w^n.}
By the Poincar\'e duality this polynomial has degree $\ell r^2+1$.
In order to compute it we determine $\Om^+_L(r,d)$ for $d>\ell\binom r2$ using Theorem \ref{thm:effective}.

\subsection{\tpdf{$\ell=1$}{l=1}}
We have
\begin{flalign*}
\Om_1(1)&=w^2&\\
\Om_1(2)&=w^5\\
\Om_1(3)&={w}^{10}+{w}^{8}\\
\Om_1(4)&
={w}^{17}+{w}^{15}+3\,{w}^{13}+2\,{w}^{11}\\
\Om_1(5)&
={w}^{26}+{w}^{24}+3\,{w}^{22}+5\,{w}^{20}+7\,{w}^{18}+9\,{w}^{16}+5\,{w}^{14}\\
\Om_1(6)&=
{w}^{37}+{w}^{35}+3\,{w}^{33}+5\,{w}^{31}+10\,{w}^{29}+13\,{w}^{27}+22
\,{w}^{25}
+27\,{w}^{23}\\
&+32\,{w}^{21}+29\,{w}^{19}+13\,{w}^{17}
\end{flalign*}

\subsection{\tpdf{$\ell=2$}{l=2}}
We have
\begin{flalign*}
\Om_2(1)&=w^3&\\
\Om_2(2)&=
{w}^{9}+{w}^{7}\\
\Om_2(3)&=
{w}^{19}+{w}^{17}+3\,{w}^{15}+4\,{w}^{13}+3\,{w}^{11}\\
\Om_2(4)&=
{w}^{33}+{w}^{31}+3\,{w}^{29}+5\,{w}^{27}+9\,{w}^{25}+13\,{w}^{23}+18
\,{w}^{21}+22\,{w}^{19}+20\,{w}^{17}\\
&+10\,{w}^{15}\\
\Om_2(5)&=
{w}^{51}+{w}^{49}+3\,{w}^{47}+5\,{w}^{45}+10\,{w}^{43}+15\,{w}^{41}+26
\,{w}^{39}+38\,{w}^{37}
+56\,{w}^{35}\\
&+77\,{w}^{33}
+105\,{w}^{31}
+131\,{
w}^{29}+156\,{w}^{27}+165\,{w}^{25}
+154\,{w}^{23}+103\,{w}^{21}\\
&+40\,{w}^{19}
\end{flalign*}
These formulas coincide with the results of \cite{rayan_co} for co-Higgs bundles.
\begin{flalign*}
\Om_2(6)&=
{w}^{73}+{w}^{71}+3\,{w}^{69}+5\,{w}^{67}+10\,{w}^{65}+16\,{w}^{63}+28
\,{w}^{61}
+42\,{w}^{59}+68\,{w}^{57}&\\
&+100\,{w}^{55}+147\,{w}^{53}+207\,{w}^{51}+292\,{w}^{49}+392\,{w}^{47}+524\,{w}^{45}+678\,{w}^{43}\\
&+858\,{w}^{41}+1050\,{w}^{39}+1253\,{w}^{37}+1427\,{w}^{35}+1537\,{w}^{33}+1531\,{w}^{31}\\
&+1364\,{w}^{29}+1022\,{w}^{27}+557\,{w}^{25}+171\,{w}^{23}
\end{flalign*}

\subsection{\tpdf{$\ell=3$}{l=3}}
We have
\begin{flalign*}
\Om_3(1)&={w}^{4}&\\
\Om_3(2)&=
{w}^{13}+{w}^{11}+2\,{w}^{9}\\
\Om_3(3)&=
{w}^{28}+{w}^{26}+3\,{w}^{24}+4\,{w}^{22}+7\,{w}^{20}+9\,{w}^{18}+9\,{
w}^{16}+6\,{w}^{14}\\
\Om_3(4)&=
{w}^{49}+{w}^{47}+3\,{w}^{45}+5\,{w}^{43}+9\,{w}^{41}+13\,{w}^{39}+22
\,{w}^{37}+30\,{w}^{35}
+45\,{w}^{33}\\&
+56\,{w}^{31}+75\,{w}^{29}+85\,{w}
^{27}+97\,{w}^{25}+87\,{w}^{23}+63\,{w}^{21}+28\,{w}^{19}\\
\Om_3(5)&=
{w}^{76}+{w}^{74}+3\,{w}^{72}+5\,{w}^{70}+10\,{w}^{68}+15\,{w}^{66}+26\,{w}^{64}+38\,{w}^{62}+60\,{w}^{60}\\&
+85\,{w}^{58}+125\,{w}^{56}+172\,{w}^{54}+238\,{w}^{52}+315\,{w}^{50}+417\,{w}^{48}+529\,{w}^{46}\\&
+669\,{w}^{44}+819\,{w}^{42}+979\,{w}^{40}+1130\,{w}^{38}+1247\,{w}^{36}+1314\,{w}^{34}\\&
+1274\,{w}^{32}+1120\,{w}^{30}+816\,{w}^{28}+457\,{w}^{26}+155\,{w}^{24}
\end{flalign*}

All these computations are in agreement with conjectures of \cite{mozgovoy_solutions}.

%\input{diff/plan}
%\input{diff/p1_rest}
%\input{diff/s02x.tex}
%\input{diff/s03x.tex}
%\input{diff/s04x.tex}
%\input{diff/nilpotent}
%\input{diff/nilp2}
%\input{diff/diff}
%\input{diff/morphism_slopes}
%\input{diff/chain_weight}
%\input{diff/oberwolfach_abstract} %May 2014
%\input{diff/limerick_abstract} %May 2015

%\input{biblio.bbl}
%\bibliography{biblio}
%\bibliographystyle{amsplain}

%\bibliography{../tex/papers}
%\bibliographystyle{../tex/hamsplain}
\providecommand{\bysame}{\leavevmode\hbox to3em{\hrulefill}\thinspace}
\providecommand{\href}[2]{#2}

\end{document}